\documentclass[11pt]{amsart}

\usepackage[T1]{fontenc}
\usepackage[utf8]{inputenc}
\usepackage{amsmath,amsthm,amsfonts,amssymb}
\usepackage{enumerate}
\usepackage{hyperref}

\hypersetup{colorlinks=true,citecolor=blue,linkcolor=blue,urlcolor=blue}
\numberwithin{equation}{section}

\newtheorem{theorem}{Theorem}[section]
\newtheorem{proposition}[theorem]{Proposition}
\newtheorem{lemma}[theorem]{Lemma}
\newtheorem{corollary}[theorem]{Corollary}
\newtheorem{question}[theorem]{Question}

\theoremstyle{definition}
\newtheorem{definition}[theorem]{Definition}
\newtheorem{example}[theorem]{Example}

\theoremstyle{remark}

\newcommand{\LP}{\mathcal{LP}}
\newcommand{\LPI}{\mathcal{LP}^{I}}
\newcommand{\qLPw}{\mathcal{LP}^{\mathrm{w}}_{q}}
\newcommand{\qLPIw}{\mathcal{LP}^{I,\mathrm{w}}_{q}}
\newcommand{\qLPs}{\mathcal{LP}^{\mathrm{s}}_{q}}
\newcommand{\qLPIs}{\mathcal{LP}^{I,\mathrm{s}}_{q}}
\newcommand{\Bq}{\mathcal{B}_{q}}
\newcommand{\R}{\mathbb{R}}
\newcommand{\C}{\mathbb{C}}
\newcommand{\N}{\mathbb{N}}

\begin{document}

\title[Weak and strong $q$-analogs of the Laguerre--P\'olya class]{Weak and strong $q$-analogs of the Laguerre--P\'olya class}

\author{Dimitar K. Dimitrov}
\address{Departamento de Matem\'atica Aplicada, IBILCE, Universidade Estadual Paulista,\\
15054-000 S\~ao Jos\'e do Rio Preto, SP, Brazil}
\email{dimitrov@ibilce.unesp.br}

\author{Boris Shapiro}
\address{Department of Mathematics, Stockholm University, SE-106 91 Stockholm, Sweden}
\email{shapiro@math.su.se}
\thanks{Corresponding author: Boris Shapiro, \texttt{shapiro@math.su.se}.}

\subjclass[2020]{Primary 30D15; Secondary 26C10, 33D15, 05A30}
\keywords{Laguerre--P\'olya class, multiplier sequence, Jensen polynomial, $q$-exponential, $q$-Pochhammer symbol, logarithmically separated zeros}

\begin{abstract}
For $0<q<1$ we compare two natural $q$-analogs of the Laguerre--P\'olya class.  The first one is a coefficient-side class, defined as the inverse image of the classical Laguerre--P\'olya class under the normalized $q$-Borel transform
\[
  \Bq\left(\sum_{k\ge 0}a_k\frac{z^k}{k!}\right)
  =\sum_{k\ge 0}a_k\frac{q^{k(k-1)/2}(1-q)^k}{(q;q)_k}z^k .
\]
The second one is a zero-side class, defined as the locally uniform closure of real polynomials whose nonzero zeros are logarithmically $q$-separated on each side of the origin.  We prove that the normalized $q$-Borel transform maps the classical Laguerre--P\'olya class, and its type-I subclass, into themselves.  This yields a $q$-Jensen-polynomial criterion and shows that the coefficient-side class strictly contains the classical Laguerre--P\'olya class.  On the zero side, we prove a genus-zero product representation.  The logarithmic separation condition prevents zeros escaping to infinity from producing a residual exponential factor; consequently no nonconstant exponential factor can occur.  For every $q\in(0,1)$ we obtain the strict chains
\[
        \qLPs\subsetneq \LP\subsetneq \qLPw,
        \qquad
        \qLPIs\subsetneq \LPI\subsetneq \qLPIw .
\]
\end{abstract}

\maketitle

\section{Introduction}

A real entire function is said to belong to the Laguerre--P\'olya class, denoted by $\LP$, if it is the limit, uniformly on compact subsets of $\C$, of real polynomials with only real zeros.  Equivalently, it has the canonical product representation
\begin{equation}\label{eq:LPproduct}
 f(z)=c z^m e^{-a z^2+bz}
       \prod_{j=1}^{\omega}\left(1-\frac{z}{x_j}\right)e^{z/x_j},
\end{equation}
where $c,b,x_j\in\R$, $a\ge0$, $m\in\N\cup\{0\}$, $x_j\ne0$, and $\sum_j |x_j|^{-2}<\infty$; the product may be finite or empty.  We shall use this standard form together with the approximation theorem for $\LP$; see, for example, Boas \cite[Chapter VII]{Boas} or Levin \cite[Chapter VIII]{Levin}.

The Laguerre--P\'olya class of type I, denoted by $\LPI$, consists of the functions in $\LP$ which, after possibly replacing $z$ by $-z$, are locally uniform limits of real polynomials whose zeros are all nonpositive.  This is one of the classical zero-preserving subclasses considered already in the work of Laguerre and P\'olya--Schur; see Laguerre's collected works \cite{Laguerre}, P\'olya--Schur \cite{PolyaSchur}, and Obreschkoff's monograph \cite{Obreschkoff}.  Equivalently, up to the change $z\mapsto -z$, they have the form
\begin{equation}\label{eq:LPIproduct}
 f(z)=c z^m e^{bz}\prod_{j=1}^{\omega}\left(1+\frac{z}{x_j}\right),
 \qquad x_j>0,
 \qquad \sum_j x_j^{-1}<\infty,
\end{equation}
with $c\in\R$ and $b\ge0$.

The classical P\'olya--Schur theorem says that a real sequence $\{\gamma_k\}_{k\ge0}$ is a multiplier sequence, i.e. the diagonal operator
\[
       \sum c_k z^k\longmapsto \sum \gamma_k c_k z^k
\]
preserves real-rootedness of polynomials, if and only if the exponential generating function
\[
       \sum_{k=0}^{\infty}\gamma_k\frac{z^k}{k!}
\]
belongs to $\LPI$ after possibly replacing $z$ by $-z$; see \cite{PolyaSchur}.  We shall also use Jensen's characterization: if
\begin{equation}\label{eq:egf}
       f(z)=\sum_{k=0}^{\infty}a_k\frac{z^k}{k!}
\end{equation}
is real entire, then $f\in\LP$ if and only if all Jensen polynomials
\begin{equation}\label{eq:Jensen}
       J_n(f;z)=\sum_{k=0}^{n}\binom{n}{k}a_k z^k,\qquad n=0,1,2,\ldots,
\end{equation}
have only real zeros.

Let $0<q<1$.  We use the standard notation
\[
       (a;q)_n=\prod_{j=0}^{n-1}(1-aq^j),\qquad (a;q)_0=1,
\]
from the theory of basic hypergeometric series; see, for example, \cite{GR}.  We recall the big $q$-exponential
\begin{equation}\label{eq:Eq}
       E_q(z)=\sum_{k=0}^{\infty}\frac{q^{k(k-1)/2}}{(q;q)_k}z^k
             =(-z;q)_\infty
             =\prod_{j=0}^{\infty}(1+q^j z).
\end{equation}
Thus $E_q$ has zeros $-q^{-j}$, $j=0,1,2,\ldots , $ and belongs to $\LPI$.  This elementary fact is the main mechanism behind the coefficient-side part of the paper.

Given a real entire function $f$ written as in \eqref{eq:egf}, define its normalized $q$-Borel transform by
\begin{equation}\label{eq:Bq}
       (\Bq f)(z)
       :=\sum_{k=0}^{\infty} a_k
       \frac{q^{k(k-1)/2}(1-q)^k}{(q;q)_k}z^k.
\end{equation}
The normalization is chosen so that
\[
       \frac{q^{k(k-1)/2}(1-q)^k}{(q;q)_k}\longrightarrow \frac1{k!}
       \qquad(q\to1^-)
\]
for each fixed $k$; hence, under mild hypotheses which are automatic for entire functions, $\Bq f\to f$ locally uniformly as $q\to1^-$.  This leads to the following coefficient-side class.

\begin{definition}\label{def:weak}
For fixed $q\in(0,1)$, the weak $q$-Laguerre--P\'olya class is
\[
       \qLPw:=\{f\text{ real entire}:\ \Bq f\in\LP\}.
\]
Similarly,
\[
       \qLPIw:=\{f\text{ real entire}:\ \Bq f\in\LPI\}
\]
is the weak $q$-Laguerre--P\'olya class of type I.
\end{definition}

There is also a natural zero-side $q$-analog.  A finite or infinite set of nonzero real numbers is logarithmically $q$-separated if, on each side of the origin, consecutive moduli differ by at least the factor $q^{-1}$.  This condition is suggested by the zero set of $E_q$.  A closely related zero-theoretic framework appears in Lamprecht's work on Suffridge-type convolution theorems and $q$-extensions of P\'olya--Schur theory \cite{Lamprecht}; finite-difference analogs of P\'olya--Schur theory in an additive mesh setting were studied in \cite{BKS}.

For later comparison we also mention that several classical $q$-special functions have been studied from the point of view of real zeros and membership in the Laguerre--P\'olya class, for instance in \cite{Ismail,Nguyen}.  The zero-side class considered here is the following.

\begin{definition}\label{def:separated}
A finite or infinite sequence of nonzero real numbers is called $q$-separated if, for every two distinct entries $x$ and $y$ of the same sign with $0<|x|\le |y|$, one has
\[
        \frac{|x|}{|y|}\le q .
\]
Thus repeated nonzero entries are not allowed.  Zeros at the origin are allowed with arbitrary multiplicity and are not counted in the separation condition.

A real polynomial is called $q$-hyperbolic if all its nonzero zeros are real and, counted with multiplicity, form a $q$-separated sequence.  The strong $q$-Laguerre--P\'olya class $\qLPs$ is the locally uniform closure, in the space of real entire functions, of $q$-hyperbolic polynomials.  The class $\qLPIs$ is defined similarly, with all nonzero zeros required to have one sign.
\end{definition}

The main results of the paper can now be summarized as follows.  First, $\Bq$ preserves the classical Laguerre--P\'olya class and its type-I subclass.  Second, the weak class is strictly larger than the classical class.  Third, the strong class has an exact genus-zero product representation, and this rules out any nonconstant exponential factor.  These facts give the strict inclusions
\[
       \qLPs\subsetneq\LP\subsetneq\qLPw,
       \qquad
       \qLPIs\subsetneq\LPI\subsetneq\qLPIw
\]
for every $q\in(0,1)$.

Let us emphasize the distinction between the two constructions.  The weak class is governed by the coefficient multiplier sequence coming from the big $q$-exponential; its basic properties are therefore consequences of the classical P\'olya--Schur theorem together with Jensen's criterion.  The strong class is instead a closure of polynomials with a prescribed multiplicative mesh of zeros.  Its product representation is not a formal consequence of the classical Laguerre--P\'olya theorem: one must show that the mesh condition is closed under locally uniform limits and that it eliminates the exponential factors which are allowed in the classical canonical product.

\section{The coefficient-side class}

We start with two elementary estimates for the normalized coefficients
\begin{equation}\label{eq:deltak}
       \delta_k(q):=k!\frac{q^{k(k-1)/2}(1-q)^k}{(q;q)_k},\qquad k=0,1,2,\ldots .
\end{equation}
The exponential generating function of $\{\delta_k(q)\}$ is
\begin{equation}\label{eq:delta-egf}
       \sum_{k=0}^{\infty}\delta_k(q)\frac{z^k}{k!}
       =E_q((1-q)z)
       =\prod_{j=0}^{\infty}\left(1+(1-q)q^j z\right).
\end{equation}
Hence this generating function belongs to $\LPI$.

\begin{lemma}\label{lem:delta}
For every $0<q<1$, the sequence $\{\delta_k(q)\}_{k\ge0}$ is a multiplier sequence of type I.  Moreover
\[
       0<\delta_k(q)\le1,
       \qquad k=0,1,2,\ldots .
\]
\end{lemma}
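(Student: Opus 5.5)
The multiplier-sequence claim follows directly from the P\'olya--Schur theorem quoted in the introduction. By \eqref{eq:delta-egf}, the exponential generating function of $\{\delta_k(q)\}$ is
\[
E_q((1-q)z)=\prod_{j\ge0}\bigl(1+(1-q)q^jz\bigr).
\]
This is exactly of the form \eqref{eq:LPIproduct} with $c=1$, $m=0$, $b=0$ and $x_j=q^{-j}/(1-q)>0$. The summability condition holds because $\sum_j x_j^{-1}=(1-q)\sum_j q^j=1<\infty$. To justify \eqref{eq:delta-egf} itself, I substitute $z\mapsto(1-q)z$ in \eqref{eq:Eq} and compare coefficients with the definition \eqref{eq:deltak}. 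Hence the generating function lies in $\LPI$ with only nonpositive zeros, and P\'olya--Schur gives that $\{\delta_k(q)\}$ is a multiplier sequence of type I.

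For the bounds, positivity is immediate. Each of $k!$, $q^{k(k-1)/2}$, $(1-q)^k$ and $(q;q)_k=\prod_{j=1}^k(1-q^j)$ is positive for $0<q<1$.

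For $\delta_k(q)\le1$, I will write $\delta_k(q)$ as a product of factors that are each at most $1$:
\[
\delta_k(q)=\prod_{j=1}^{k}\frac{j\,q^{j-1}(1-q)}{1-q^j}.
\]
Using $1-q^j=(1-q)(1+q+\cdots+q^{j-1})$, each factor equals
\[
\frac{j\,q^{j-1}}{1+q+\cdots+q^{j-1}}.
\]
Each term $q^i$ with $0\le i\le j-1$ satisfies $q^i\ge q^{j-1}$, so the denominator is at least $j\,q^{j-1}$ and the factor is at most $1$. Equivalently, one can note $q^{j-1}\le(1+q+\cdots+q^{j-1})/j$ by comparing the smallest term with the average. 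This gives $\delta_k(q)\le1$, with equality only for $k\le1$.

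The only point needing care is getting the telescoping product right, that is, matching $q^{k(k-1)/2}=\prod_{j=1}^{k}q^{j-1}$ and $(1-q)^k$ factor by factor with $k!$ and $(q;q)_k$. Otherwise the argument is routine.
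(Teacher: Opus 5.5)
Your proposal is correct and follows the paper's proof essentially verbatim: the multiplier-sequence claim comes from \eqref{eq:delta-egf} and the P\'olya--Schur theorem, since $E_q((1-q)z)$ has only negative zeros. The bound comes from the same factorization $\delta_k(q)=\prod_{j=1}^{k} j q^{j-1}/(1+q+\cdots+q^{j-1})$ with each factor at most one. Your extra details (the summability check $\sum_j x_j^{-1}=1$ and strict inequality for $k\ge2$) are accurate but not needed.
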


\begin{proof}
The first assertion follows immediately from \eqref{eq:delta-egf} and the P\'olya--Schur theorem, since all zeros of $E_q((1-q)z)$ are real and negative.

For the estimate, write
\[
\delta_k(q)=\prod_{j=1}^{k}\frac{j q^{j-1}}{1+q+\cdots+q^{j-1}}.
\]
Since $1+q+\cdots+q^{j-1}\ge j q^{j-1}$ for $0<q<1$, each factor is at most one.
\end{proof}

\begin{lemma}\label{lem:Bq-continuity}
Let $0<q<1$.  If $f_n\to f$ locally uniformly in $\C$, where the $f_n$ are real entire functions, then $\Bq f_n\to\Bq f$ locally uniformly in $\C$.
\end{lemma}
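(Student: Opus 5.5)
The plan is to control Taylor coefficients by Cauchy's estimates and then use the bound $0<\delta_k(q)\le 1$ from Lemma~\ref{lem:delta}. Write $f_n(z)=\sum_k a_k^{(n)} z^k/k!$ and $f(z)=\sum_k a_k z^k/k!$, and set $c_k^{(n)}:=a_k^{(n)}/k!$, $c_k:=a_k/k!$ for the ordinary Taylor coefficients. By \eqref{eq:deltak},
\[
(\Bq f_n)(z)-(\Bq f)(z)=\sum_{k=0}^{\infty}\delta_k(q)\bigl(c_k^{(n)}-c_k\bigr)z^k .
\]
Thus $\Bq$ acts as a Hadamard product with the bounded sequence $\{\delta_k(q)\}$. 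In particular $\Bq f$ is entire whenever $f$ is, since its coefficients are dominated in modulus by those of $f$.

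Now fix $R>0$ and take the larger radius $2R$. Put $M_n:=\max_{|z|=2R}|f_n(z)-f(z)|$. Locally uniform convergence gives $M_n\to0$. Cauchy's estimate on the circle $|z|=2R$ yields
\[
|c_k^{(n)}-c_k|\le M_n(2R)^{-k}.
\]
Combining this with $0<\delta_k(q)\le1$, we get for $|z|\le R$
\[
|(\Bq f_n)(z)-(\Bq f)(z)|\le \sum_{k\ge0} M_n (2R)^{-k}R^k=2M_n\longrightarrow0 .
\]
Since $R$ is arbitrary, this is locally uniform convergence in $\C$.

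No step here is a genuine obstacle. The only point needing care is to apply Cauchy's estimate on a strictly larger circle than the disc where convergence is wanted, so that the geometric series converges. Any bound $\delta_k(q)\le C\rho^k$ with $\rho<2$ would also suffice, but Lemma~\ref{lem:delta} gives the clean bound $1$.
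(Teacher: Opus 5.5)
Your proposal is correct and is essentially the paper's own argument: Cauchy estimates on a larger circle combined with $0<\delta_k(q)\le 1$ from Lemma~\ref{lem:delta}, then summing a geometric series. The only difference is that you fix the outer radius as $2R$ (giving the bound $2M_n$), whereas the paper uses an arbitrary $S>R$.
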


\begin{proof}
Write
\[
       f_n(z)=\sum_{k=0}^{\infty}b_{n,k}z^k,
       \qquad
       f(z)=\sum_{k=0}^{\infty}b_k z^k.
\]
Then
\[
       \Bq f_n(z)=\sum_{k=0}^{\infty}\delta_k(q)b_{n,k}z^k,
       \qquad
       \Bq f(z)=\sum_{k=0}^{\infty}\delta_k(q)b_kz^k.
\]
Fix $0<R<S$ and put
\[
       M_n=\sup_{|z|\le S}|f_n(z)-f(z)|.
\]
By Cauchy's estimate, $|b_{n,k}-b_k|\le M_n S^{-k}$.  Hence, using Lemma \ref{lem:delta}, for $|z|\le R$,
\[
       |\Bq(f_n-f)(z)|
       \le \sum_{k=0}^{\infty}\delta_k(q)|b_{n,k}-b_k|R^k
       \le M_n\sum_{k=0}^{\infty}\left(\frac RS\right)^k.
\]
Since $M_n\to0$, the claim follows.
\end{proof}

\begin{theorem}\label{thm:Bq-preserves-LP}
For every $0<q<1$,
\[
       \Bq(\LP)\subseteq \LP,
       \qquad
       \Bq(\LPI)\subseteq \LPI.
\]
Equivalently,
\[
       \LP\subseteq\qLPw,
       \qquad
       \LPI\subseteq\qLPIw.
\]
\end{theorem}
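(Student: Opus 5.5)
The plan is to approximate by polynomials, apply the multiplier sequence $\{\delta_k(q)\}$ from Lemma \ref{lem:delta} to each approximant, and pass to the limit using Lemma \ref{lem:Bq-continuity}. Writing $f(z)=\sum_k b_k z^k$, we have $\Bq f(z)=\sum_k \delta_k(q) b_k z^k$, so $\Bq$ is exactly the diagonal operator attached to $\{\delta_k(q)\}$ acting on ordinary Taylor coefficients.

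Let $f\in\LP$. By the approximation theorem for $\LP$ there are real polynomials $p_n$ with only real zeros such that $p_n\to f$ locally uniformly. By Lemma \ref{lem:delta}, $\{\delta_k(q)\}$ is a multiplier sequence. Hence each $\Bq p_n=\sum_k\delta_k(q)b_{n,k}z^k$ is a real polynomial with only real zeros; its degree is unchanged because $\delta_k(q)>0$. Lemma \ref{lem:Bq-continuity} gives $\Bq p_n\to\Bq f$ locally uniformly. Since $\LP$ is closed under locally uniform limits of real-rooted real polynomials (by its very definition), $\Bq f\in\LP$.

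For the type-I statement, first replace $z$ by $-z$ if necessary. This is harmless because $\Bq(f(-\cdot))(z)=(\Bq f)(-z)$. We may then take $p_n$ with all zeros nonpositive. The point to check is that the multiplier sequence also preserves the property of having only nonpositive zeros, i.e.\ that it is a type-I sequence in the P\'olya--Schur sense. I would argue this directly. Since $\delta_k>0$, $\Bq p_n$ has positive (or all same-sign) coefficients, so it has no positive zeros. It is real-rooted by the previous step, so all its zeros are nonpositive. The limit $\Bq f$ is then a locally uniform limit of polynomials with nonpositive zeros, hence lies in $\LPI$. Finally, the equivalence with $\LP\subseteq\qLPw$ and $\LPI\subseteq\qLPIw$ is immediate from Definition \ref{def:weak}.

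The only delicate point is confirming that "multiplier sequence" in Lemma \ref{lem:delta} means preservation of real-rootedness for all real polynomials, not just for those with nonpositive zeros. The P\'olya--Schur theorem as quoted guarantees exactly this, since the generating function $E_q((1-q)z)$ lies in $\LPI$. The sign argument then handles the type-I case without any further input.
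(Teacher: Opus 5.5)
Your proposal is correct and follows essentially the same route as the paper: approximate by real-rooted (respectively one-signed) polynomials, apply the diagonal operator given by the positive multiplier sequence $\{\delta_k(q)\}$, and pass to the limit via Lemma~\ref{lem:Bq-continuity}. The only cosmetic difference is in the type-I case, where you exclude positive zeros directly from the positivity of the $\delta_k(q)$ and the same-sign coefficients of $p_n$, while the paper invokes the type-I multiplier property for this step and uses positivity only for the reflected case $p(-z)$.
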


\begin{proof}
Let
\[
       T_q\left(\sum_{k=0}^{n}c_k z^k\right)
       =\sum_{k=0}^{n}\delta_k(q)c_k z^k.
\]
By Lemma \ref{lem:delta} and the P\'olya--Schur theorem, $T_q$ preserves real-rootedness of polynomials.  More precisely, the generating function in \eqref{eq:delta-egf} has only negative zeros and belongs to $\LPI$; hence $\{\delta_k(q)\}$ is a type-I multiplier sequence.  Therefore $T_q$ maps polynomials with all zeros in $(-\infty,0]$ to polynomials with the same property.  Since every $\delta_k(q)$ is positive, applying the same statement to $p(-z)$ shows that $T_q$ also preserves the subclass of real-rooted polynomials whose zeros all lie in $[0,\infty)$.

If $f\in\LP$, choose real-rooted polynomials $p_n$ converging to $f$ locally uniformly.  Then $T_qp_n$ are real-rooted and, by Lemma \ref{lem:Bq-continuity}, $T_qp_n=\Bq p_n\to\Bq f$ locally uniformly.  Therefore $\Bq f\in\LP$.  The type-I assertion is identical, using approximation by polynomials whose zeros have one sign, after one fixed change $z\mapsto -z$ if necessary.
\end{proof}

\begin{theorem}[A $q$-Jensen criterion]\label{thm:qJensen}
Let $f(z)=\sum_{k=0}^{\infty}a_kz^k/k!$ be real entire and fix $q\in(0,1)$.  Then $f\in\qLPw$ if and only if, for every $n\ge0$, the polynomial
\begin{equation}\label{eq:qJensen}
       J_{n,q}(f;z):=
       \sum_{k=0}^{n}\binom{n}{k}a_k\,\delta_k(q)\,z^k
\end{equation}
has only real zeros.

Similarly, $f\in\qLPIw$ if and only if the polynomials $J_{n,q}(f;z)$ have only real zeros of one sign, after one common change $z\mapsto -z$ if necessary.
\end{theorem}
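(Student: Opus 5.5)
The plan is to reduce everything to Jensen's classical criterion applied to the entire function $g:=\Bq f$. First I will write $g$ in the exponential-generating form \eqref{eq:egf}. By \eqref{eq:Bq} and \eqref{eq:deltak},
\[
       g(z)=\sum_{k=0}^{\infty}a_k\frac{q^{k(k-1)/2}(1-q)^k}{(q;q)_k}z^k
       =\sum_{k=0}^{\infty}\bigl(a_k\delta_k(q)\bigr)\frac{z^k}{k!},
\]
so the exponential coefficients of $g$ are $a_k\delta_k(q)$. Comparing with \eqref{eq:Jensen} gives the identity $J_{n,q}(f;z)=J_n(g;z)$ for every $n$. Before invoking Jensen's criterion I must check that $g$ is real entire. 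Reality is clear because $a_k$ and $\delta_k(q)$ are real. Entireness follows from Lemma \ref{lem:delta}: since $0<\delta_k(q)\le 1$, the Taylor coefficients of $g$ are dominated in modulus by those of $f$, so $g$ has infinite radius of convergence.

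For the first assertion, Jensen's characterization applied to $g$ says that $g\in\LP$ if and only if every $J_n(g;z)$ has only real zeros. By Definition \ref{def:weak}, $f\in\qLPw$ means exactly that $g\in\LP$. Combining this with the identity $J_{n,q}(f;\cdot)=J_n(g;\cdot)$ gives the first equivalence.

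For the type-I statement, I need the analogous Jensen characterization for $\LPI$: $g\in\LPI$ if and only if, after one common change $z\mapsto -z$, all $J_n(g;z)$ have only real nonpositive zeros. The paper has not stated this, so I will supply it; this is the main obstacle.

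For the forward direction, suppose $g\in\LPI$ with, say, nonpositive zeros. Write $J_n(g;z)=\sum_{k\le n}\binom nk b_kz^k$. The sequence $\{\binom nk k!/n^k\}$ (zero for $k>n$) is a type-I multiplier sequence, as is classical (P\'olya--Schur). So $J_n(g;z/n)$ arises from applying it to $g$. By approximating $g$ with polynomials having zeros in $(-\infty,0]$ and passing to the limit, $J_n(g;z/n)$ has only nonpositive zeros. Alternatively, a nonnegative-coefficient argument works: since $g$ or $-g$ has nonnegative Taylor coefficients, the real zeros of $J_n(g;\cdot)$ cannot be positive.

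The converse has a cleaner route. The classical identity $J_n(g;z/n)\to g(z)$ locally uniformly, valid for any entire $g$, exhibits $g$ as a limit of polynomials with zeros of one sign, so $g\in\LPI$.

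Here "zeros of one sign" should be read as "all zeros in $(-\infty,0]$ or all in $[0,\infty)$". The common sign across all $n$ is automatic, because the sign pattern is determined by the sign relations among the coefficients $a_k\delta_k(q)$, and $\delta_k(q)>0$. With these pieces in place, the second equivalence follows exactly as the first, with $\LPI$ in place of $\LP$.
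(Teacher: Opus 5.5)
Your proposal is correct and follows the paper's proof: you identify $J_{n,q}(f;\cdot)$ with the Jensen polynomials of $g=\Bq f$, whose exponential coefficients are $a_k\delta_k(q)$. You then apply Jensen's criterion, and you handle the type-I case through the type-I variant of Jensen's theorem, as the paper does. The paper only sketches that variant (Jensen approximation plus P\'olya--Schur), whereas you prove it: the forward direction uses the finite type-I multiplier sequence with generating function $(1+z/n)^n$, and the converse uses the limit $J_n(g;z/n)\to g$. You also check explicitly that $g$ is entire, using $\delta_k(q)\le 1$.
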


\begin{proof}
The ordinary Maclaurin coefficients of $\Bq f$ are $\delta_k(q)a_k/k!$.  Thus the Jensen polynomials of $\Bq f$ are precisely \eqref{eq:qJensen}.  Jensen's characterization of $\LP$ gives the first assertion.

For the type-I assertion, recall the standard type-I variant: a real entire function belongs to $\LPI$ if and only if, after possibly replacing $z$ by $-z$ once and for all, all its Jensen polynomials have only nonpositive zeros.  This follows by applying the classical Jensen approximation theorem to $g(z)$ or $g(-z)$ and using the P\'olya--Schur characterization of type-I multiplier sequences.  Applying this statement to $g=\Bq f$ gives the asserted criterion for $\qLPIw$.
\end{proof}

\begin{theorem}\label{thm:qto1}
For every real entire function $f$,
\[
       \Bq f\longrightarrow f
       \qquad\text{locally uniformly in }\C\text{ as }q\to1^-.
\]
\end{theorem}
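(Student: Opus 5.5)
Write $f(z)=\sum_{k\ge0}b_kz^k$, so that $\Bq f(z)=\sum_{k\ge0}\delta_k(q)b_kz^k$ with $\delta_k(q)$ as in \eqref{eq:deltak}. The plan is a dominated-convergence (Tannery-type) argument for the power series. It rests on two facts: the uniform bound $0<\delta_k(q)\le1$ from Lemma \ref{lem:delta}, and the pointwise limit $\delta_k(q)\to1$ as $q\to1^-$ for each fixed $k$.

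\textbf{Step 1: pointwise limit of the coefficients.} Use the product formula from the proof of Lemma \ref{lem:delta},
\[
\delta_k(q)=\prod_{j=1}^{k}\frac{j q^{j-1}}{1+q+\cdots+q^{j-1}}.
\]
For fixed $j$, each factor tends to $j/j=1$ as $q\to1^-$. Since the product has finitely many factors, $\delta_k(q)\to1$ for every fixed $k$.

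\textbf{Step 2: the uniform estimate.} Fix $R>0$ and $\varepsilon>0$. For $|z|\le R$,
\[
|\Bq f(z)-f(z)|\le\sum_{k=0}^{\infty}\bigl(1-\delta_k(q)\bigr)|b_k|R^k .
\]
Because $f$ is entire, $\sum_k|b_k|R^k<\infty$. Choose $N$ with $\sum_{k>N}|b_k|R^k<\varepsilon$. Since $0\le 1-\delta_k(q)\le1$ by Lemma \ref{lem:delta}, the tail is bounded by $\varepsilon$ uniformly in $q\in(0,1)$. The finitely many terms with $k\le N$ tend to $0$ as $q\to1^-$ by Step 1. Hence $\limsup_{q\to1^-}\sup_{|z|\le R}|\Bq f-f|\le\varepsilon$, and letting $\varepsilon\to0$ gives the theorem.

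\textbf{Main obstacle.} The argument needs uniform control of the tail in $q$. This is exactly what the bound $\delta_k(q)\le1$ supplies; without it, convergence of each individual coefficient would not suffice. Since Lemma \ref{lem:delta} is already available, no real difficulty remains. The only thing to check is that $f$ being entire gives absolute convergence of $\sum_k|b_k|R^k$ for every $R$.
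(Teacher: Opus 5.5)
Your proof is correct and is the paper's argument: Lemma~\ref{lem:delta} supplies the $q$-independent summable majorant $|b_k|R^k$, and the termwise limit $\delta_k(q)\to1$ finishes via dominated convergence. Your head/tail (Tannery) split just carries that dominated-convergence step out explicitly, and it makes the uniformity on $|z|\le R$ transparent because your bound $\sum_k(1-\delta_k(q))|b_k|R^k$ does not depend on $z$.
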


\begin{proof}
Write $f(z)=\sum a_k z^k/k!$.  For fixed $k$,
\[
       \frac{q^{k(k-1)/2}(1-q)^k}{(q;q)_k}\to\frac1{k!}
       \qquad(q\to1^-).
\]
Moreover,
\[
       k!\frac{q^{k(k-1)/2}(1-q)^k}{(q;q)_k}=\delta_k(q)\le1
\]
by Lemma \ref{lem:delta}.  Hence, on $|z|\le R$,
\[
       \left|a_k\frac{q^{k(k-1)/2}(1-q)^k}{(q;q)_k}z^k\right|
       \le |a_k|\frac{R^k}{k!}.
\]
The majorant is summable because $f$ is entire.  Dominated convergence gives the result.
\end{proof}

\begin{theorem}\label{thm:weak-strict}
For every $q\in(0,1)$,
\[
       \LP\subsetneq\qLPw,
       \qquad
       \LPI\subsetneq\qLPIw.
\]
\end{theorem}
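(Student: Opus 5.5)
Inclusions are Theorem \ref{thm:Bq-preserves-LP}, so it remains to exhibit, for each $q\in(0,1)$, a real entire $f\notin\LP$ with $\Bq f\in\LPI$. Then $f\in\qLPIw\subseteq\qLPw$ and $f\notin\LP\supseteq\LPI$, which settles both strict inclusions at once. The natural choice is a polynomial, since $\Bq$ acts on polynomials through the multipliers $\delta_k(q)$ and preserves degree.

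Take $f(z)=1+2bz+cz^2$, a quadratic. Its Maclaurin coefficients are $1,2b,c$, and those of $\Bq f$ are $1,2b\delta_1,c\delta_2$, where $\delta_1(q)=1$ and
\[
\delta_2(q)=\frac{2q(1-q)^2}{(1-q)(1-q^2)}=\frac{2q}{1+q}\in(0,1).
\]
The condition $f\notin\LP$ is equivalent to $f$ having nonreal zeros, i.e. $b^2<c$. The condition $\Bq f\in\LPI$, for a quadratic with positive coefficients, is equivalent to real zeros, i.e. $b^2\ge c\,\delta_2(q)$; the zeros are then automatically negative. So we need
\[
c\,\delta_2(q)\le b^2<c .
\]
Since $\delta_2(q)<1$, the choice $b=1$, $c=(1+q)/(2q)$ works. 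With these values
\[
\Bq f(z)=1+2z+z^2=(1+z)^2\in\LPI,
\]
while $f$ has discriminant $4-4c<0$ because $c>1$. Hence $f\notin\LP$.

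The only point needing care is to check that $\Bq$ on a polynomial really is coefficientwise multiplication by $\delta_k(q)$, as in the proof of Lemma \ref{lem:Bq-continuity}, and to compute $\delta_2$ correctly. No step is a real obstacle. If a transcendental example were preferred, one could multiply $f$ by a suitable element and argue via Theorem \ref{thm:qJensen}, but the polynomial example suffices.
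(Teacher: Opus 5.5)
Your proposal is correct and is essentially the paper's proof. Your choice $b=1$, $c=(1+q)/(2q)$ gives exactly the paper's example $F_q(z)=1+2z+\frac{1+q}{2q}z^2$, with $\Bq F_q=(1+z)^2$ and discriminant $-2(1-q)/q<0$, and the non-strict inclusions come, as you say, from Theorem \ref{thm:Bq-preserves-LP}.
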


\begin{proof}
Consider the quadratic polynomial
\begin{equation}\label{eq:strict-example}
       F_q(z)=1+2z+\frac{1+q}{2q}z^2.
\end{equation}
In the notation $F_q(z)=\sum a_kz^k/k!$, one has
\[
       a_0=1,
       \qquad a_1=2,
       \qquad a_2=\frac{1+q}{q}.
\]
Therefore
\[
       \Bq F_q(z)=1+2z+z^2=(1+z)^2.
\]
Thus $F_q\in\qLPIw\subseteq\qLPw$.  On the other hand, the discriminant of $F_q$ is
\[
       4-4\frac{1+q}{2q}
       =-\frac{2(1-q)}{q}<0.
\]  Hence $F_q$ has two nonreal zeros and does not belong to $\LP$.  A fortiori it does not belong to $\LPI$.
\end{proof}

\section{The zero-side class}

We now prove the product representation for $\qLPs$.  The key point is that logarithmic $q$-separation prevents zeros escaping to infinity from producing a residual exponential factor.  This is the only delicate point in the zero-side theory.

\begin{lemma}\label{lem:tail}
Let $p$ be a $q$-hyperbolic polynomial.  For $R>0$,
\[
       \sum_{\substack{x:p(x)=0\\ |x|\ge R}}\frac1{|x|}
       \le \frac{2}{(1-q)R},
\]
where the sum is over nonzero zeros and counts multiplicities.  More generally, if $s>0$, then
\[
       \sum_{\substack{x:p(x)=0\\ |x|\ge R}}\frac1{|x|^s}
       \le \frac{2}{(1-q^s)R^s}.
\]
\end{lemma}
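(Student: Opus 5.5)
We split the nonzero zeros of $p$ with $|x|\ge R$ according to sign and bound the contribution of each side by $\frac{1}{(1-q^s)R^s}$; adding the two sides then gives the factor $2$.

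Fix one side, say the positive zeros lying in $[R,\infty)$. By Definition \ref{def:separated}, these zeros are simple, since repeated nonzero entries are forbidden. So they can be listed in increasing order as $R\le y_1<y_2<\cdots<y_N$. The separation condition gives $y_j/y_{j+1}\le q$ for consecutive entries. By induction this yields $y_{j+1}\ge q^{-1}y_j$, and hence
\[
y_j\ge q^{-(j-1)}y_1\ge q^{-(j-1)}R,\qquad j=1,\dots,N.
\]

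It follows that
\[
\sum_{j=1}^{N}\frac{1}{y_j^{s}}\le \frac{1}{R^s}\sum_{j=1}^{N}q^{s(j-1)}\le \frac{1}{R^s}\sum_{j=0}^{\infty}(q^{s})^{j}=\frac{1}{(1-q^s)R^s},
\]
because $0<q^s<1$ for $s>0$. The negative zeros are handled identically after applying $x\mapsto -x$, working with moduli $|x|\ge R$. Summing the two sides gives $2/((1-q^s)R^s)$, and the case $s=1$ is the first displayed inequality.

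No step is a genuine obstacle. The only point needing care is multiplicity: because the sum counts zeros with multiplicity, we must use that the $q$-separation assumption rules out repeated nonzero zeros. Zeros at the origin are excluded from the sum, so their arbitrary multiplicity is irrelevant. Finally, the bound is uniform in $p$, depending only on $q$, $s$ and $R$, which is presumably what makes it useful for locally uniform limits later in the paper.
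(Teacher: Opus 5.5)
Your proof is correct and is essentially the paper's argument. On each half-axis you list the zeros in increasing modulus, use $q$-separation to get $|x_j|\ge q^{-(j-1)}R$, bound the sum by a geometric series, and double for the two sides. Your explicit remark that $q$-separation rules out repeated nonzero zeros, so counting with multiplicity does no harm, is a detail the paper leaves implicit.
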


\begin{proof}
On the positive half-axis, list the zeros not smaller than $R$ as
\[
       R\le x_1<x_2<\cdots .
\]
The $q$-separation condition gives $x_{j+1}\ge q^{-1}x_j$, hence $x_j\ge Rq^{-(j-1)}$.  Therefore
\[
       \sum_j x_j^{-s}\le R^{-s}\sum_{j=0}^{\infty}q^{js}=\frac1{(1-q^s)R^s}.
\]
The negative half-axis gives the same bound.
\end{proof}

\begin{lemma}[Closedness of the multiplicative mesh]\label{lem:closed-mesh}
Let $0<q<1$, and let $p_n$ be $q$-hyperbolic polynomials converging locally uniformly in $\C$ to a real entire function $f\not\equiv0$.  Then all zeros of $f$ are real.  Moreover, every nonzero zero of $f$ is simple, and the nonzero zeros of $f$ form a $q$-separated set.
\end{lemma}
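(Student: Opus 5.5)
The argument rests on Hurwitz's theorem together with a counting argument near each zero of $f$.

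\emph{Real zeros.} The $p_n$ are real polynomials with only real zeros, so $f$ is a locally uniform limit of real-rooted polynomials. By Hurwitz's theorem every zero of $f\not\equiv0$ is a limit of zeros of $p_n$, and hence is real. (Equivalently, $f\in\LP$ by definition.)

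\emph{Simplicity and separation.} Fix a nonzero zero $x_0$ of $f$, say $x_0>0$, of multiplicity $m$. Choose $\varepsilon>0$ so small that the disc $D=\{|z-x_0|<\varepsilon\}$ contains no other zero of $f$ and also $(x_0+\varepsilon)/(x_0-\varepsilon)<q^{-1}$. By Hurwitz, for all large $n$ the polynomial $p_n$ has exactly $m$ zeros in $D$, counted with multiplicity. These zeros are real, and since $p_n$ is real-rooted they lie in $(x_0-\varepsilon,x_0+\varepsilon)\subset(0,\infty)$.

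Now suppose $m\ge2$. Either $p_n$ has a repeated nonzero zero there, which is forbidden by $q$-separation, or it has two distinct positive zeros $x<y$ in that interval. In the second case $x/y>(x_0-\varepsilon)/(x_0+\varepsilon)>q$, which contradicts $q$-separation. Hence $m=1$.

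Next let $0<x<y$ be two distinct positive zeros of $f$. Hurwitz again gives, for large $n$, zeros $x_n\to x$ and $y_n\to y$ of $p_n$, and these are distinct for large $n$. Separation gives $x_n/y_n\le q$, and letting $n\to\infty$ yields $x/y\le q$. Negative zeros are handled by the same argument applied to absolute values, or by applying it to $p_n(-z)$ and $f(-z)$.

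\emph{Main obstacle.} The only care needed is the localization step: we must make sure that the $m$ Hurwitz zeros of $p_n$ in $D$ really lie on the same side of the origin and inside an interval whose endpoint ratio exceeds $q$. This is exactly why $\varepsilon$ is chosen with $\varepsilon<x_0$ and $(x_0+\varepsilon)/(x_0-\varepsilon)<q^{-1}$. Zeros of $p_n$ that tend to $0$ cause no trouble, because the statement concerns only the nonzero zeros of $f$ and zeros at the origin are exempt from the separation condition.
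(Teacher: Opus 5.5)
Your proposal is correct and follows the paper's proof essentially step for step. You use Hurwitz's theorem for reality of the zeros, a localized Hurwitz count in a small one-sided interval to rule out multiplicity $\ge 2$, and Hurwitz-approximating zeros $x_n\to x$, $y_n\to y$ to carry the separation inequality to the limit; your explicit choice $(x_0+\varepsilon)/(x_0-\varepsilon)<q^{-1}$ just makes precise the paper's remark that the quotient of the two nearby zeros tends to $1$, and passing to the limit in $x_n/y_n\le q$ is equivalent to the paper's contradiction argument with disjoint intervals.
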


\begin{proof}
The assertion that all zeros of $f$ are real follows from Hurwitz's theorem: a small disk centered at a nonreal zero and disjoint from the real axis would have to contain zeros of $p_n$ for all sufficiently large $n$, which is impossible.

Let $x\ne0$ be a zero of $f$ of multiplicity $r$.  Choose $\varepsilon>0$ so small that the interval $(x-\varepsilon,x+\varepsilon)$ does not meet the origin and that all its points have the same sign.  Hurwitz's theorem, applied on a small circle around $x$, implies that for all sufficiently large $n$ the polynomial $p_n$ has exactly $r$ zeros, counted with multiplicity, in this interval.  If $r\ge2$, two of these zeros have moduli whose quotient tends to $1$, contradicting the $q$-separation condition for all large $n$.  Hence $r=1$.

It remains to check separation between distinct limiting zeros.  Let $x$ and $y$ be two distinct zeros of $f$ with the same sign and with $0<|x|\le |y|$.  If $|x|/|y|>q$, choose disjoint intervals $I_x$ and $I_y$, centered at $x$ and $y$ respectively and contained in the same half-axis, such that
\[
        \frac{|u|}{|v|}>q
        \qquad (u\in I_x,\, v\in I_y,\ 0<|u|\le |v|).
\]
Hurwitz's theorem gives zeros $x_n\in I_x$ and $y_n\in I_y$ of $p_n$ for all large $n$, contradicting the $q$-separation of the zeros of $p_n$.  Therefore $|x|/|y|\le q$, as required.
\end{proof}

\begin{theorem}[Product representation for the strong class]\label{thm:strong-product}
Let $0<q<1$.  A nonzero real entire function $f$ belongs to $\qLPs$ if and only if it has the representation
\begin{equation}\label{eq:strong-product}
       f(z)=c z^m\prod_{j=1}^{\omega}\left(1-\frac{z}{x_j}\right),
\end{equation}
where $c\in\R\setminus\{0\}$, $m\in\N\cup\{0\}$, the numbers $x_j\in\R\setminus\{0\}$ are $q$-separated, and
\[
       \sum_j\frac1{|x_j|}<\infty.
\]
The zero function belongs to $\qLPs$ by definition.
\end{theorem}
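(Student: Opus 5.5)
The sufficiency direction comes first, since it is routine. Suppose $f$ has the form \eqref{eq:strong-product} with $q$-separated $x_j$ and $\sum_j|x_j|^{-1}<\infty$. The truncations
\[
p_N(z)=cz^m\prod_{j\le N}(1-z/x_j)
\]
are $q$-hyperbolic polynomials, because any subset of a $q$-separated sequence is again $q$-separated. Since $\sum|x_j|^{-1}<\infty$, the product converges absolutely and locally uniformly, so $p_N\to f$ and hence $f\in\qLPs$.

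For necessity, let $p_n$ be $q$-hyperbolic polynomials with $p_n\to f\not\equiv0$ locally uniformly. By Lemma \ref{lem:closed-mesh}, the zeros of $f$ are real, the nonzero ones are simple and $q$-separated. Lemma \ref{lem:tail} applied to that set (the proof of the lemma uses only separation) gives $\sum_j|x_j|^{-1}<\infty$. Therefore the genus-zero product $P(z)=z^m\prod(1-z/x_j)$ converges, and $f=e^{h}P$ with $h$ entire. It remains to show that $h$ is constant.

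The plan is to control the $p_n$ directly. Let $m$ be the order of $f$ at $0$. By Hurwitz, for large $n$ the polynomial $p_n$ has exactly $m$ zeros in a small disk $|z|<\rho$; only one can be nonzero on each side, so for large $n$ they are all $0$ except at most two tiny ones, which tend to $0$. Write
\[
p_n(z)=c_n z^{m_n}\prod_k(1-z/y_{n,k})
\]
over the nonzero zeros. I would split the zeros of $p_n$ into those with $|y|\le R$ and those with $|y|>R$. By Hurwitz, the near part converges to the corresponding finite part of $P$; the tiny near-zero factors need a separate treatment, since $(1-z/y)$ with $y\to0$ degenerates. To avoid this, I would instead normalize $p_n(z)/z^{m}$ after absorbing those factors into $c_n$. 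Writing $z-y$ as the factor, $p_n=\tilde c_n\prod_{|y|<\rho}(z-y)\prod_{|y|\ge\rho}(1-z/y)$, the first product tends to $z^m$. For the far part $Q_{n,R}(z)=\prod_{|y|>R}(1-z/y)$, Lemma \ref{lem:tail} gives, uniformly in $n$,
\[
\Bigl|\log Q_{n,R}(z)\Bigr|\le \sum_{|y|>R}\frac{2|z|}{|y|}\le \frac{4|z|}{(1-q)R}
\qquad\text{for }|z|\le R/2,
\]
using $|\log(1-w)|\le 2|w|$ for $|w|\le1/2$. Hence on $|z|\le R/2$ we have $|Q_{n,R}-1|\le\eta(R)$ with $\eta(R)\to0$ as $R\to\infty$, for fixed $|z|\le r$. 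The same estimate holds for the tail of $P$.

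Comparing the two sides, for fixed $r$ and large $R$ (chosen with $\pm R$ avoiding zeros of $f$), on $|z|\le r$ we get
\[
f(z)=\lim_n \tilde c_n\Bigl[\prod_{|y|<\rho}(z-y)\prod_{\rho\le|y|\le R}(1-z/y)\Bigr]Q_{n,R}(z),
\]
and the bracket converges to the truncation of $P$ at level $R$. Evaluating at a point $z_0$ with $f(z_0)\neq0$ shows that $\tilde c_n$ converges to some $c$ up to a factor within $1\pm\eta(R)$. Letting $R\to\infty$ yields $e^{h(z)}=c\,\lim(\text{ratio of far parts})$, whose modulus satisfies $|e^{h(z)}/c-1|\le 2\eta(R)+o(1)$ for every $R$. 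Hence $e^{h}\equiv c$ on $|z|\le r$, and so everywhere.

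I expect the main obstacle to be this uniform tail estimate: ruling out that far zeros of $p_n$ accumulate to produce an $e^{bz}$ factor. It is exactly here that the $n$-independent bound of Lemma \ref{lem:tail} is essential; without separation, the sum $\sum 1/y$ over far zeros could converge to a nonzero $b$, as happens for $(1+z/n)^n\to e^z$. A secondary technical point is the bookkeeping of zeros of $p_n$ collapsing to the origin, which I would handle with the $(z-y)$ normalization above.
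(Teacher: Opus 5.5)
Your proof is correct and follows essentially the same route as the paper: Hurwitz for the zeros in a fixed disk, the $n$-independent tail bound from Lemma \ref{lem:tail} for the far factors, and the double limit $n\to\infty$ then $R\to\infty$. The paper handles the case $f(0)\ne0$ first, normalizing by $p_n(0)$, and then divides out the $m$ small zeros via $g_n=p_n/A_n$; you instead use a single mixed normalization and fix the constant at an auxiliary point $z_0$. Delete the aside that only one small zero can be nonzero on each side: it is false, since several nonzero zeros on one side can collapse to the origin while staying $q$-separated (e.g.\ $1/n$, $q/n$, $q^2/n$). Your $\prod_{|y|<\rho}(z-y)$ normalization never uses that claim, so the argument is unaffected.
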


\begin{proof}
First suppose that $f\in\qLPs$ and $f\not\equiv0$.  Choose $q$-hyperbolic polynomials $p_n$ such that $p_n\to f$ locally uniformly.  By Lemma \ref{lem:closed-mesh}, all zeros of $f$ are real, every nonzero zero is simple, and the nonzero zeros of $f$ are $q$-separated.  On the positive half-axis their moduli, if infinite in number, therefore grow at least geometrically; the same is true on the negative half-axis.  Hence
\[
       \sum_j |x_j|^{-1}<\infty .
\]

It remains to prove that no nonconstant exponential factor is left after passing to the limit.  We first treat the case $f(0)\ne0$.  Then $p_n(0)\ne0$ for all sufficiently large $n$, and
\[
       p_n(z)=p_n(0)\prod_x\left(1-\frac{z}{x}\right),
\]
where the product is over the nonzero zeros of $p_n$.  Fix $\rho>0$.  Choose $R>2\rho$ such that $f$ has no zero on $|z|=R$.  Let $P_{n,R}$ be the finite product formed from the zeros of $p_n$ in $|z|<R$, and let $T_{n,R}$ be the remaining tail product.  Thus
\[
       p_n(z)=p_n(0)P_{n,R}(z)T_{n,R}(z).
\]
By Hurwitz's theorem, for this fixed $R$ the zeros of $p_n$ in $|z|<R$ converge, as a multiset, to the zeros of $f$ in $|z|<R$.  Consequently
\[
       p_n(0)P_{n,R}(z)\longrightarrow f(0)P_R(z)
\]
locally uniformly, where
\[
       P_R(z)=\prod_{\substack{j:\ |x_j|<R}}\left(1-\frac{z}{x_j}\right).
\]
In particular $p_n(0)P_{n,R}$ is uniformly bounded on $|z|\le\rho$ for all large $n$.

For $|z|\le\rho$ and any tail zero satisfying $|x|\ge R$, we have $|z/x|<1/2$ and hence, with the principal branch of the logarithm,
\[
       \left|\log\left(1-\frac{z}{x}\right)\right|\le \frac{2\rho}{|x|}.
\]
Lemma \ref{lem:tail} gives the uniform estimate
\[
       \left|\log T_{n,R}(z)\right|\le \frac{4\rho}{(1-q)R},
       \qquad |z|\le\rho .
\]
Therefore
\[
       \sup_{|z|\le\rho}|T_{n,R}(z)-1|\le
       \exp\left(\frac{4\rho}{(1-q)R}\right)-1,
\]
uniformly in $n$.  Passing first to the limit $n\to\infty$ and then letting $R\to\infty$, we obtain
\[
       f(z)=f(0)\prod_j\left(1-\frac{z}{x_j}\right)
\]
locally uniformly on $|z|\le\rho$.  Since $\rho$ was arbitrary, this holds locally uniformly in $\C$.

Now suppose that $f$ has a zero of order $m$ at the origin.  Choose $\varepsilon>0$ so small that $f$ has no zeros in $0<|z|\le2\varepsilon$.  By Hurwitz's theorem, for all sufficiently large $n$ the polynomial $p_n$ has exactly $m$ zeros, counted with multiplicity, in $|z|<\varepsilon$ and no zeros on $|z|=\varepsilon$.  Let
\[
        A_n(z)=\prod_{\nu=1}^{m}(z-\alpha_{n,\nu})
\]
be the monic factor formed from these zeros, and write $p_n=A_n g_n$.  Then $\alpha_{n,\nu}\to0$, so $A_n\to z^m$ locally uniformly.  On the circle $|z|=\varepsilon$ the functions $g_n=p_n/A_n$ converge uniformly to $f(z)/z^m$.  Since the $g_n$ are holomorphic in $|z|\le\varepsilon$, the maximum principle gives the same convergence in the disk.  On compact subsets of $\C\setminus\{0\}$ the convergence follows directly from $p_n\to f$ and $A_n\to z^m$.  Hence $g_n\to f(z)/z^m$ locally uniformly in $\C$.

The nonzero zeros of each $g_n$ form a subsequence of the nonzero zeros of $p_n$; hence $g_n$ is again $q$-hyperbolic.  Since $f(z)/z^m$ does not vanish at the origin, the already proved case applies to this limit and gives the required product representation for $f$.

Conversely, if $f$ has the representation \eqref{eq:strong-product}, then the partial products
\[
       c z^m\prod_{j=1}^{N}\left(1-\frac{z}{x_j}\right)
\]
are $q$-hyperbolic polynomials and converge locally uniformly to $f$, because $\sum_j |x_j|^{-1}<\infty$.  Hence $f\in\qLPs$.
\end{proof}

\begin{corollary}\label{cor:strong-typeI}
A nonzero real entire function belongs to $\qLPIs$ if and only if, after possibly replacing $z$ by $-z$, it has the representation
\[
       f(z)=c z^m\prod_{j=1}^{\omega}\left(1+\frac{z}{x_j}\right),
       \qquad x_j>0,
\]
where the sequence $\{x_j\}$ is $q$-separated and $\sum_j x_j^{-1}<\infty$.
\end{corollary}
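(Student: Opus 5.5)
The plan is to derive the corollary directly from Theorem \ref{thm:strong-product}, adding only a one-sign bookkeeping step. By definition, $\qLPIs$ is the locally uniform closure of $q$-hyperbolic polynomials whose nonzero zeros all have one sign. I read this, as for $\LPI$, with one common change $z\mapsto -z$ if necessary. Concretely, $f\in\qLPIs$ means that, after replacing $z$ by $-z$ once, $f$ is a limit of $q$-hyperbolic polynomials $p_n$ whose nonzero zeros are all negative. It is harmless to allow each $p_n$ its own sign: passing to a subsequence, infinitely many $p_n$ share the same sign, and the map $z\mapsto -z$ preserves both $q$-separation and locally uniform convergence.

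For the forward direction, take $f\not\equiv 0$ and such approximants $p_n$ with all nonzero zeros in $(-\infty,0)$. Since $p_n\in\qLPs$, Theorem \ref{thm:strong-product} gives the product \eqref{eq:strong-product} with $q$-separated $x_j$ and $\sum_j|x_j|^{-1}<\infty$. It remains to show that every $x_j<0$. Suppose instead that $f(x)=0$ for some $x>0$. Take a small disk around $x$ that avoids $(-\infty,0]$ and contains no other zero of $f$. By Hurwitz's theorem, $p_n$ has a zero in this disk for all large $n$. That zero must be real by Lemma \ref{lem:closed-mesh} (or simply because $p_n$ is real-rooted), so it is positive, which contradicts the choice of $p_n$. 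Writing $x_j=-y_j$ with $y_j>0$ turns $1-z/x_j$ into $1+z/y_j$. The sequence $\{y_j\}$ is $q$-separated because separation depends only on moduli, and $\sum y_j^{-1}<\infty$.

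For the converse, suppose $f(z)=cz^m\prod_j(1+z/x_j)$ with $x_j>0$ $q$-separated and $\sum_j x_j^{-1}<\infty$. Consider the partial products $cz^m\prod_{j\le N}(1+z/x_j)$. Each is a $q$-hyperbolic polynomial whose nonzero zeros $-x_j$ all have one sign. They converge locally uniformly to $f$, by the standard estimate $\bigl|\prod_{j>N}(1+z/x_j)-1\bigr|\le \exp\bigl(|z|\sum_{j>N}x_j^{-1}\bigr)-1$, exactly as at the end of the proof of Theorem \ref{thm:strong-product}. Hence $f\in\qLPIs$, after undoing the change $z\mapsto -z$ if it was used.

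I do not expect a real obstacle here. The only point needing care is the sign-preservation step: zeros of the limit cannot appear on the side of the origin where the approximants have none. Hurwitz's theorem, applied on a disk that avoids the origin and the opposite half-axis, handles this. The sign-normalization convention also needs a brief check, namely that a subsequence with a common sign can always be extracted.
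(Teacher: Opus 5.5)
Your proposal is correct and takes essentially the paper's approach. The paper gives no separate proof and treats the corollary as immediate from Theorem \ref{thm:strong-product}; the one-sign condition passes to the limit by Hurwitz's theorem, and the partial products give the converse, exactly as you argue. Two small wording points: Theorem \ref{thm:strong-product} is applied to $f$, which lies in $\qLPs$ as the limit of the $p_n$, and the Hurwitz zero of $p_n$ is real by $q$-hyperbolicity directly, not by Lemma \ref{lem:closed-mesh}.
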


\begin{corollary}\label{cor:strong-in-LP}
For every $q\in(0,1)$,
\[
       \qLPs\subseteq\LP,
       \qquad
       \qLPIs\subseteq\LPI.
\]
More precisely, every nonzero function in $\qLPs$ has genus zero and no nonconstant exponential factor.
\end{corollary}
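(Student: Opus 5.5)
The plan is to read the corollary off Theorem~\ref{thm:strong-product} and Corollary~\ref{cor:strong-typeI}, and then compare with the canonical forms \eqref{eq:LPproduct} and \eqref{eq:LPIproduct}. The zero function is trivially in $\LP$, since it is the limit of the zero polynomial. So let $f\in\qLPs$ with $f\not\equiv0$. By Theorem~\ref{thm:strong-product},
\[
 f(z)=cz^m\prod_{j}\left(1-\frac{z}{x_j}\right),\qquad \sum_j|x_j|^{-1}<\infty,
\]
with real $x_j\neq0$. Since $\sum_j|x_j|^{-1}<\infty$, the product $\prod_j e^{z/x_j}$ converges locally uniformly to $e^{bz}$ with $b=\sum_j x_j^{-1}\in\R$. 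Hence
\[
f(z)=cz^m e^{-bz}\prod_j\left(1-\frac{z}{x_j}\right)e^{z/x_j}.
\]
This is exactly of the form \eqref{eq:LPproduct}, with $a=0$, linear coefficient $-b$, and $\sum_j|x_j|^{-2}\le\left(\sum_j|x_j|^{-1}\right)^2<\infty$. Therefore $f\in\LP$.

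For a more direct argument that avoids the canonical form, I would instead note that the partial products $cz^m\prod_{j\le N}(1-z/x_j)$ are real polynomials with only real zeros. They converge locally uniformly to $f$, as shown in the converse part of Theorem~\ref{thm:strong-product}. This places $f$ in $\LP$ by the very definition of $\LP$ as a closure.

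For the type-I inclusion I would use Corollary~\ref{cor:strong-typeI}. After possibly replacing $z$ by $-z$, we have $f(z)=cz^m\prod_j(1+z/x_j)$ with $x_j>0$ and $\sum_j x_j^{-1}<\infty$. This is \eqref{eq:LPIproduct} with $b=0$. Alternatively, the partial products have all zeros in $(-\infty,0]$ and converge locally uniformly to $f$, so $f\in\LPI$.

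For the ``more precisely'' clause, $f$ is $z^m$ times a canonical product of genus zero, because $\sum_j|x_j|^{-1}<\infty$ lets us use the primary factors $1-z/x_j$ without exponential corrections. The only remaining multiplier is the constant $c$. Uniqueness of the Hadamard factorization for functions of order at most one then rules out any hidden nonconstant factor $e^{\beta z}$ or $e^{-az^2}$: representation \eqref{eq:strong-product} already equals $f$ identically, so such a factor would have to be identically $1$.

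The only point needing care is the convergence of $\prod_j e^{z/x_j}$, which is immediate from absolute summability. All the real work was done in Theorem~\ref{thm:strong-product}, so I expect no genuine obstacle here.
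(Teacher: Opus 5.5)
Your proposal is correct and is the paper's own one-line argument: read the representation off Theorem~\ref{thm:strong-product} (and Corollary~\ref{cor:strong-typeI}) and compare with the classical canonical forms \eqref{eq:LPproduct} and \eqref{eq:LPIproduct}, which is exactly what you do with $a=0$ and linear coefficient $-\sum_j x_j^{-1}$. Your partial-product alternative is also fine; in fact the bare inclusions $\qLPs\subseteq\LP$ and $\qLPIs\subseteq\LPI$ already follow from the definitions, since $q$-hyperbolic polynomials are real-rooted, so only the genus-zero refinement genuinely needs Theorem~\ref{thm:strong-product}.
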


\begin{proof}
This is immediate from Theorem \ref{thm:strong-product} and the classical product representation of $\LP$.
\end{proof}

\begin{proposition}\label{prop:not-multiplicative}
For every $q\in(0,1)$, the class $\qLPs$ is not closed under multiplication.
\end{proposition}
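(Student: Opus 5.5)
The plan is to exhibit two explicit members of $\qLPs$ whose product violates the description given by Theorem \ref{thm:strong-product}. The cheapest candidate is a square: take $p(z)=1-z$. Its only nonzero zero is $1$. A one-element sequence is trivially $q$-separated, so $p$ is a $q$-hyperbolic polynomial and $p\in\qLPs$. The product $p^2=(1-z)^2$ has a double zero at $z=1$. Definition \ref{def:separated} explicitly forbids repeated nonzero entries, so the zero multiset of $p^2$ is not $q$-separated.

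To conclude that $p^2\notin\qLPs$, I would invoke Lemma \ref{lem:closed-mesh}, or equivalently Theorem \ref{thm:strong-product}. If $p^2$ were a locally uniform limit of $q$-hyperbolic polynomials, then, since $p^2\not\equiv 0$, every nonzero zero of $p^2$ would have to be simple. The zero at $1$ has multiplicity two, which is a contradiction. Hence $p\cdot p\notin\qLPs$, although both factors lie in $\qLPs$.

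For a less degenerate example with simple zeros, I could take $p_1(z)=1-z$ and $p_2(z)=1-z/t$ with $1<t<q^{-1}$. Each factor lies in $\qLPs$, and the product has the simple zeros $1$ and $t$ on the same side of the origin. Their ratio satisfies $1/t>q$, so separation fails, and Lemma \ref{lem:closed-mesh} again excludes the product. This variant shows that the failure is not merely an artifact of multiplicities.

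I do not expect any real obstacle. The only point needing care is that membership in $\qLPs$ is defined via closure, so non-membership must come from Lemma \ref{lem:closed-mesh} and not just from the product failing to be $q$-hyperbolic itself. Nothing changes if type-I examples are wanted: both examples have positive zeros, so the same argument shows that $\qLPIs$ is not closed under multiplication either.
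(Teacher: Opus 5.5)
Your proof is correct and essentially matches the paper's. Your second example, $(1-z)(1-z/t)$ with $1<t<q^{-1}$, is exactly the one the paper uses, where it is excluded via Theorem \ref{thm:strong-product}; your primary example $(1-z)^2$ is an equally valid variant that uses the simplicity clause of Lemma \ref{lem:closed-mesh} instead of the separation clause, and you correctly note that non-membership must come from this closure lemma rather than from the product merely failing to be $q$-hyperbolic.
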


\begin{proof}
Choose $y$ with $1<y<q^{-1}$.  The polynomials $1-z$ and $1-z/y$ each belong to $\qLPs$, since each has only one nonzero zero.  Their product has two positive zeros, $1$ and $y$, which are not $q$-separated.  By Theorem \ref{thm:strong-product}, the product does not belong to $\qLPs$.
\end{proof}

\section{Comparison of the two classes}

The results above give the precise inclusion chain.

\begin{theorem}\label{thm:chain}
For every $q\in(0,1)$,
\[
       \qLPs\subsetneq \LP\subsetneq \qLPw.
\]
Likewise,
\[
       \qLPIs\subsetneq \LPI\subsetneq \qLPIw.
\]
\end{theorem}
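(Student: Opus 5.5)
The chain is assembled from results already proved. The inclusions $\qLPs\subseteq\LP$ and $\qLPIs\subseteq\LPI$ are Corollary \ref{cor:strong-in-LP}. The inclusions $\LP\subseteq\qLPw$ and $\LPI\subseteq\qLPIw$ are Theorem \ref{thm:Bq-preserves-LP}. Strictness on the right is Theorem \ref{thm:weak-strict}, witnessed by the quadratic $F_q$ of \eqref{eq:strict-example}. So the only new ingredient is strictness on the left: we need a function in $\LPI$, hence in $\LP$, that is not in $\qLPs$.

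For this we take the simplest candidate excluded by Theorem \ref{thm:strong-product}, namely $e^{z}$. It lies in $\LPI$ by \eqref{eq:LPIproduct} with $b=1$ and no zeros, or directly as the limit of $(1+z/n)^n$, whose zeros are all nonpositive. Suppose $e^z\in\qLPs$. Since it is nonzero, Theorem \ref{thm:strong-product} gives $e^z=cz^m\prod_j(1-z/x_j)$. Because $e^z$ has no zeros, $m=0$ and the product is empty, so $e^z$ would be constant, which is a contradiction. Hence $e^z\in\LPI\setminus\qLPs$. Since $\qLPIs\subseteq\qLPs$, this one example separates both pairs.

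As an alternative witness avoiding exponential factors, one could use $(1-z)^2$ or $(1-z)(1-z/y)$ with $1<y<q^{-1}$. By Lemma \ref{lem:closed-mesh}, or by the argument of Proposition \ref{prop:not-multiplicative}, such a polynomial is real-rooted with one-signed zeros, but its zeros are not $q$-separated, so it lies in $\LPI$ but not in $\qLPs$. Either witness suffices, and I would record $e^z$ because it also shows the qualitative point that exponential factors are excluded.

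There is no real obstacle here. The only point needing care is the bookkeeping for the type-I chain: we must check that each witness lies in the type-I class, and that the weak-class witness $F_q$ lies in $\qLPIw\setminus\LPI$. Theorem \ref{thm:weak-strict} already established the latter, since $\Bq F_q=(1+z)^2$.
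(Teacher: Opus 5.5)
Your proof is correct and follows the paper's argument. The left inclusions come from Corollary~\ref{cor:strong-in-LP} and the right ones, with their strictness, from Theorems~\ref{thm:Bq-preserves-LP} and~\ref{thm:weak-strict}; left strictness comes from the fact, via Theorem~\ref{thm:strong-product}, that zero-free members of $\qLPs$ are constant. The only difference is cosmetic: the paper uses $e^{-z^2}$ to separate $\qLPs$ from $\LP$ and $e^{z}$ for the type-I chain, whereas you use $e^{z}$ for both via $\qLPIs\subseteq\qLPs$, which is equally valid.
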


\begin{proof}
The inclusions $\qLPs\subseteq\LP$ and $\qLPIs\subseteq\LPI$ follow from Corollary \ref{cor:strong-in-LP}.  They are proper because $e^{-z^2}\in\LP$ but, by Theorem \ref{thm:strong-product}, a zero-free function in $\qLPs$ must be constant; and $e^{z}\in\LPI$ but a zero-free function in $\qLPIs$ must again be constant.

The inclusions $\LP\subseteq\qLPw$ and $\LPI\subseteq\qLPIw$ are Theorem \ref{thm:Bq-preserves-LP}.  They are proper by Theorem \ref{thm:weak-strict}.
\end{proof}

\begin{example}\label{ex:Eq}
The function
\[
       E_q((1-q)z)=\prod_{j=0}^{\infty}\left(1+(1-q)q^jz\right)
\]
belongs to $\qLPIs$, because its zeros are
\[
       -\frac{q^{-j}}{1-q},\qquad j=0,1,2,\ldots,
\]
which form a $q$-geometric progression on the negative axis.  It also belongs to $\qLPIw$ by Theorem \ref{thm:Bq-preserves-LP}, since it belongs to $\LPI$ classically.
\end{example}

\begin{example}\label{ex:weak-not-strong}
For every $q\in(0,1)$, the function $e^{-z^2}$ belongs to $\qLPw$ by Theorem \ref{thm:Bq-preserves-LP}, but it does not belong to $\qLPs$ by Theorem \ref{thm:strong-product}.  Similarly, $e^z\in\qLPIw$ but $e^z\notin\qLPIs$.
\end{example}

\begin{example}\label{ex:sin}
The function $\sin z$ belongs to $\LP$, hence to $\qLPw$, for every $q\in(0,1)$.  It is not in $\qLPs$, because its positive zeros $\pi,2\pi,3\pi,\ldots$ are not logarithmically $q$-separated.
\end{example}

\section{Weak \texorpdfstring{$q$}{q}-multiplier sequences}

The coefficient-side definition suggests a cautious terminology for coefficient sequences.

\begin{definition}\label{def:qmult}
Let $0<q<1$.  A real sequence $\{a_k\}_{k\ge0}$ is called a weak $q$-multiplier sequence if the associated $q$-exponential generating function
\[
       \sum_{k=0}^{\infty}a_k\frac{q^{k(k-1)/2}(1-q)^k}{(q;q)_k}z^k
\]
belongs to $\LP$, whenever this series defines an entire function.
\end{definition}

This definition is meant only as coefficient-side terminology; it is not, by itself, an intrinsic preservation theorem for an operator on a zero-side class.  With this terminology, Theorem \ref{thm:Bq-preserves-LP} says that every classical multiplier sequence gives a weak $q$-multiplier sequence after the normalization \eqref{eq:Bq}.  A full P\'olya--Schur-type classification of weak or strong $q$-multiplier sequences remains open.  Lamprecht's convolution theory \cite{Lamprecht} and the finite-difference mesh-preserving theory of \cite{BKS} suggest that the zero-side version should be genuinely more rigid than the coefficient-side version.

\section{Questions}

We close with some questions which, in the present formulation, are deliberately separated from the proved results.

\begin{question}
Find an intrinsic coefficient characterization of $\qLPw$ which does not simply restate the condition $\Bq f\in\LP$.  In particular, is there a useful analog of the classical Laguerre inequalities for the coefficients of functions in $\qLPw$?
\end{question}

\begin{question}
Is $\qLPw$ closed under differentiation?  Equivalently, if $\Bq f\in\LP$, must $\Bq(f')\in\LP$?
\end{question}

\begin{question}
Characterize diagonal operators preserving $\qLPs$ or preserving the set of $q$-hyperbolic polynomials.  Is there a satisfactory multiplicative-mesh analog of the P\'olya--Schur theorem?
\end{question}

\begin{question}
Which classical $q$-special functions belong to $\qLPw$, to $\qLPs$, or to both?  In particular, can the known real-rootedness results for the Ramanujan function and related $q$-Bessel functions, such as those discussed in \cite{Ismail,Nguyen}, be reinterpreted naturally in this framework?
\end{question}

\begin{question}
Study the limiting behavior of the strong class as $q\to1^-$ and $q\to0^+$.  The weak class converges to the classical class in the operator sense of Theorem \ref{thm:qto1}; the corresponding zero-side limit is subtler because the $q$-separation condition degenerates as $q\to1^-$.
\end{question}

%\section*{Statements and Declarations}

%\noindent\textbf{Funding.}
%The research of the first author was supported by the Brazilian Science Foundations FAPESP under Grant 97/6280-0 and CNPq under Grant 300645/95-3.

%\medskip
%\noindent\textbf{Competing interests.}
%The authors have no relevant financial or non-financial interests to disclose.

%\medskip
%\noindent\textbf{Data availability.}
%Data sharing is not applicable to this article, as no datasets were generated or analyzed during the current study.

%\medskip
%\noindent\textbf{Author contributions.}
%Both authors contributed to the mathematical conception, development, and writing of the manuscript.  Both authors read and approved the final manuscript.

\end{document}